\def\No{№}
\newtheorem{teo}{{\bf Theorem}}%[section]
\newtheorem{lemma}{\bf Lemma}
\newtheorem{pre}{\bf }
\title{Sharply doubly transitive groups with saturation conditions}
\author{Anatoliy I.Sozutov, Evgeny B. Durakov }
\begin{document}

\maketitle

{\small\bf Annotation:} {\small A number of conditions were found under which
a sharply doubly transitive permutation group has an abelian normal
divider. }

\smallskip

{\small\bf Keywords:} {\small exactly doubly transitive group,
Frobenius groups, saturation condition, finite and generalized finite
the elements. }

\bigskip

Recall that the permutation group $G$ of the set $F$ ($ | F | \geq k $) is called
{\it is exactly $ k $ -transitive} on $ F$ if for any two ordered
sets $ (\alpha_1, \, ..., \, \alpha_k) $ and
$ (\beta_1, \, ..., \, \beta_k) $ elements from $ F $ such that
$ \alpha_i \ne \alpha_j $ and $ \beta_i \ne \beta_j $ for $ i \ne j $,
there is exactly one element of the group $ G $ taking
$ \alpha_i $ to $ \beta_i $ ($ i = 1, \, ..., \, k $).

As K. Jordan \cite{J} proved, in a finite sharply doubly transitive group
$ T $ regular permutations (moving each element of the set $ F $)
together with the identity substitution constitute an abelian normal subgroup.
Is Jordan's theorem true for infinite groups,
in the general case is unknown (see, for example,
 \cite [questions 11.52, 12.48] {Kou}).
To particular solutions of this central question in the theory of near-fields and
near-domains \cite{W} dozens of works by famous authors are devoted.
We especially note that in 2014 in \cite{RST,TenZ} exactly
doubly transitive groups without abelian normal subgroups, characteristics 2.
\cite{RipsTent} and \cite{AndreTent} constructed examples of exactly 2-transitive groups and
simple exactly 2-transitive groups, respectively, that have characteristic 0 and do not contain non-trivial Abelian normal subgroups. And in 2008 \cite{Turk} studied the question for exactly 2-transitive groups of characteristic 3.
 For other characteristics the question remains open.
Sharply 2-transitive groups are closely related to
algebraic structures such as
near-fields, near-areas,
$ KT $ -fields (Kerby-Tits fields), projective planes, etc. (see \cite [Ch. V] {W},
\cite [chap. 20] {Hal}).

We continue to investigate infinite exactly doubly transitive
groups and related near-domains \cite {BDS, SD3}.
In this paper, a number of conditions are found under which a group has an abelian normal
divisor (see \cite [questions 11.52, 12.48] {Kou}),
and the corresponding near-domain \cite{W} is a near-field.
All necessary definitions are given in paragraph 1.
We only recall that the group $ G $ is saturated with groups of some set of finite groups
$ \mathfrak {X} $ if every finite subgroup of $ G $ is contained in
a subgroup of the group $ G $ isomorphic to some group from $ \mathfrak {X} $.

\smallskip

This work was supported by the Russian Foundation for Basic Research (project \No 19-01-00566 A) and Krasnoyarsk
mathematical center, funded by the Ministry of Education and Science of the Russian Federation in the framework of
activities for the creation and development of regional NMCs
(Agreement 075-02-2020-1534 / 1).

\smallskip

\begin{teo}
Sharply doubly transitive group $ T $ saturated with finite Frobenius groups
of substitutions, the set $ F $ of odd characteristic has a regular abelian normal subgroup
and the almost domain $ F (+, \cdot) $ is a near-field if at least
one of the following properties of the stabilizer $ T_ \alpha $ of the point $ \alpha \in F $:
\begin {enumerate}
\item $ T_ \alpha $ --- Shunkov group;
\item $ T_ \alpha $ is a periodic group and $ T_\alpha $ does not contain conjugate
dense subgroups;
\item $ T_ \alpha $ is (locally)
a finite normal subgroup of order more than 2;
\item $ T_\alpha $ contains a finite element $ a $ of prime odd order
not equal to five.
\end {enumerate}
\label{t1}
\end{teo}

\begin{teo}
Let in a sharply triply transitive permutation group $ G $
odd characteristic point stabilizer
has a regular abelian normal subgroup
and each of its finite subgroups is contained in a finite unsolvable
subgroup of the group $ G $. Then $ G $ is locally finite.
\label{t2}
\end{teo}

\section{Results and concepts used}

Let $ T $ be a sharply doubly transitive group
permutations of the set $ F $.
According to M. Hall's theorem \cite{W}, we assume that $ F $ is almost a domain
with operations of addition $ + $ and
multiplication $ \cdot $, and $ T $ is its affine transformation group
$ x \to a + bx $ $ (b \ne 0) $, and $ T_{\alpha} \simeq F^* (\cdot) $ is the stabilizer of the point $ \alpha \in F $.
Let $ T_ \alpha $ be the stabilizer of the point $ \alpha \in F $,
$ J $ denotes the set of involutions of the group $ T $. For each involution $ k \in
J $ by $ N_k $ we denote the set $ kJ = \{kj \mid j \in J \} $.
The result of a substitution action
$ t \in T $ to an element (point) $ \gamma \in F $ is denoted by $ \gamma^t $.
The following statements are true.

\begin{pre}
$(T,T_{\alpha})$ --- a Frobenius pair, i.e.
$T_{\alpha}\cap T^t_{\alpha}=1$ for anyone $t\in T\setminus T_{\alpha}$.
\label{p1}
\end{pre}

\begin{pre}
The group $ T $ contains involutions and all involutions in $ T $ are conjugate.
The set $ J \setminus T_\alpha $ is nonempty and
$ T_{\alpha} $ is transitive on $ J \setminus T_\alpha $.
Product of two different involutions from $ T $
is a regular substitution, i.e. acting on $ F $ without fixed points.
\label{p2}
\end{pre}

\begin{pre}
If $ T_{\alpha} $ contains an involution $ j $, then it is unique in $ T_{\alpha} $,
$ T = T_\alpha N_j $ and
$ T $ acts by conjugation on the set $ J $ of all its involutions
exactly twice transitively. All products $ kv $, where $ k, v \in J $, $ k \ne v $
in the group $ T $ are conjugate and have the same or a simple odd order
$ p $, or infinite order. In the first case $ {\rm Char} \, T = p $,
in the second case $ {\rm Char} \, T = 0 $ (if there are no involutions in $ T_{\alpha} $,
then $ {\rm Char} \, T = 2 $ by definition).
\label{p3}
\end{pre}

\begin{pre} The following result was proved in \cite{BDS}
Let $ {\rm Char} \, T = p> 2 $,
$ b $ is strictly real with respect to
$ j $ is an element from $ T \smallsetminus T_ {\alpha} $, $ A = C_T (b) $ and $ V = N_T (A) $.
Then: 

1) the subgroup $ A $ is periodic, abelian is inverted by the involution $ j $
and is strongly isolated in $ T $; 

2)the subgroup $ V $ acts exactly twice transitively on the orbit $ \Delta = \alpha^A $,
moreover, $ A $ is an elementary abelian regular normal subgroup of $ V $,
$ H = V \cap T_{\alpha} $ is a point stabilizer and $ V = A \lambda H $.
 If in addition $ | \bigcap_{x \in T_{\alpha}} H^ x |> 2, $
then $ A $ is a regular abelian normal subgroup of $ T $,
and near-domain
$ F (+, \cdot) $ is a near-field.
\label{p7}
\end{pre}

The next proposition shows that the saturation condition
finite Frobenius groups partially holds in an arbitrary
exactly doubly transitive group of odd characteristic.

\begin{pre}
When $ {\rm Char} \, T = p> 2 $, each
dihedral subgroup of $ T $
is contained in a finite Frobenius subgroup of $ T $ with kernel of order $ p $
and a cyclic complement of order $ p-1 $.
\label{p4}
\end{pre}

Recall that $ (a, b) $ - {\it finiteness condition} in the group $ G $ means that $ a $ and $ b $ ---
its nontrivial elements, and in the group $ G $ all subgroups
$ \langle a, b ^ x \rangle $ ($ x \in G $). The elements $ a $ and $ b $ are called
{\it is generalized finite}, and if $ a = b $, then $ a $ is called {\it finite}
elements in the group $ G $.
The following proposition was proved in \cite [Lemmas 1, 2 and Theorem] {SD3}:

\begin{pre}
Let $ T $ be a sharply doubly transitive group
characteristics $ {\rm Char} \, T \ne 2 $ contains elements $ a, b $ with $ (a, b) $ - condition
limbs.
Then
\begin {enumerate}
\item If for any $ x \in T $ the subgroup $ \langle a^x \rangle $ and $ \langle b \rangle $
are not incident, then at least one of the elements $ a, b $
belongs to the stabilizer of some point of the near-domain $ F $;
\item If $ | a | = | b |> 2 $, $ T $ has a regular abelian normal
subgroup;
\item If $ | a | \cdot | b | = 2k> 4 $, $ T $ has a regular abelian normal
subgroup.
\end {enumerate}
\label{p5}
\end{pre}

\begin {pre}\cite [Theorem 2] {C}
If the group $ T $ has
is a finite element of order $> 2 $, then $ T $ has a regular
abelian normal subgroup and near-domain
$ F $ is a near-field.
\label{p6}
\end{pre}

\begin{pre}
If $ T $ contains a locally finite subgroup containing a regular
substitution and intersecting $ T_ \alpha $ by a normal subgroup in it,
consisting of more than two elements, then $ T $ has
a regular abelian normal subgroup, and the near-domain
$ F (+, \cdot) $ is
near-field.
\end{pre}

\begin{pre}
If $ {\rm Char \,} T \ne 2 $, $ T $ contains the Frobenius group
$ V $ with involution and complement $ H = V \cap T_\alpha $,
where $ T_\alpha $ is the stabilizer of the point $ \alpha $, and in $ H $ there is a normal in
$ T_\alpha $ is a subgroup of order $> 2 $,
then the group $ T $ has a regular abelian normal subgroup.
\label{st2}
\end{pre}

\begin{pre}
In the complement of a finite Frobenius group, each cyclic subgroup
of prime order $ q> 5 $ is normal.
\label{p9}
\end{pre}

\section{Proof of Theorem 1}

Let the group $ T $ and the near-domain $ F $ satisfy the conditions of Theorem 1.
The set of Frobenius subgroups of the group $ T $ containing
finite subgroup $ L $, we denote by $ \mathfrak {X} (L) $.
For any Frobenius group $ M \leq T $, we agree to its kernel
denote by $ F_M $, and the complement suitable for the meaning of the text ---
through $ H_M $.
The notation $ L \leq H_M $ often used in the following text
for the subgroup $ L \leq M \in \mathfrak {X} (L) $ will be
mean that $ L $ is contained in some complement $ H_M $
group $ M $.

\smallskip

{\bf Comment.} {\it
It is well known that a near-domain $ F (+, \cdot) $ is a near-field if and only if
when $ T $ has a regular abelian normal subgroup.
Therefore, to prove Theorem 1
it suffices to prove the existence in the infinite group $ T $ of a regular abelian
normal subgroup.}

\begin{lemma}
We can assume
what ${\rm Char}\, T\ne 3$.
\label{l1}
\end{lemma}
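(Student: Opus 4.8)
The plan is to prove directly that when ${\rm Char}\,T=3$ the hypotheses of Theorem~\ref{t1} already force a regular abelian normal subgroup; then no generality is lost in assuming ${\rm Char}\,T\ne3$ afterwards. Throughout I use Propositions~\ref{p2} and~\ref{p3}: in characteristic $3$ the product of any two distinct involutions is a regular substitution of order exactly $3$, so in particular no two distinct involutions commute.

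First I would fix distinct involutions $k,v\in J$ and set $a=kv$, an element of order $3$ lying in $T\setminus T_\alpha$ (being regular, it fixes no point). Since $a^{k}=k^{-1}(kv)k=vk=(kv)^{-1}=a^{-1}$, the element $a$ is strictly real with respect to $k$, so Proposition~\ref{p7} applies with $b=a$ and $j=k$. Unconditionally this supplies $A=C_T(a)$, a periodic elementary abelian $3$-group inverted by $k$ and strongly isolated in $T$, together with $V=N_T(A)=A\lambda H$ acting sharply doubly transitively on $\Delta=\alpha^{A}$, where $H=V\cap T_\alpha$. By the final clause of Proposition~\ref{p7} it then suffices to verify the single inequality $\bigl|\bigcap_{x\in T_\alpha}H^{x}\bigr|>2$, whereupon $A$ is the sought regular abelian normal subgroup and $F(+,\cdot)$ is a near-field.

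To secure that inequality I would feed in the saturation hypothesis together with the assumption on $T_\alpha$ from Theorem~\ref{t1}, items (1)--(4). Every finite subgroup containing $a$ lies, by saturation and Proposition~\ref{p4}, in a finite Frobenius subgroup $M$ whose kernel $F_M$ is a $3$-group; tracking the abelian parts of these kernels through an exhaustion of the infinite group $T$ by finite subgroups, the aim is to show that $A$ is an unbounded elementary abelian $3$-group and hence that $H$, of order $|A|-1$, is far from being of order $2$. The remaining task is to turn the chosen finiteness condition on $T_\alpha$ into a lower bound on the core $\bigcap_{x\in T_\alpha}H^{x}$, i.e. to control how the complement $H$ is situated inside the full stabilizer.

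This core estimate is the main obstacle, and it is precisely where characteristic $3$ is delicate. The minimal configuration furnished by Proposition~\ref{p4} is the Frobenius group $S_3$, whose complement has order $p-1=2$ --- too small to be fed into Proposition~\ref{p7} or into the statement~\ref{st2}, each of which demands an invariant subgroup of order $>2$ --- and Proposition~\ref{p9} says nothing for the prime $3$. An alternative endgame avoids the core estimate: if one can show that $a$ is a \emph{finite} element, i.e. that every $\langle a,a^{x}\rangle$ is finite (by embedding $\langle k,v,k^{x},v^{x}\rangle$ into a finite Frobenius subgroup), then Proposition~\ref{p6}, or equally Proposition~\ref{p5}(2) with $b=a$, closes the argument immediately. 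The difficulty there is of the same nature: a group generated by several non-commuting involutions with pairwise products of order $3$ is, in the freest case, an infinite Coxeter group, so sharp double transitivity and saturation must be invoked to rule out the infinite possibilities.
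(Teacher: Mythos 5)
Your proposal does not actually prove the lemma: you set up plausible machinery (Propositions~\ref{p2}, \ref{p3}, \ref{p7}, \ref{p4}) but then explicitly leave open both of the steps that would finish the argument --- the core estimate $\bigl|\bigcap_{x\in T_\alpha}H^{x}\bigr|>2$ needed to invoke the final clause of Proposition~\ref{p7}, and the alternative route of showing that $a=kv$ is a finite element so that Proposition~\ref{p6} (or Proposition~\ref{p5}) applies. You candidly name these as obstacles, and they are real ones: as you yourself observe, in characteristic $3$ the minimal Frobenius configurations have complement of order $p-1=2$, which is exactly too small for every tool in the paper that demands an invariant subgroup of order greater than $2$, and a priori a system of involutions with pairwise products of order $3$ can generate infinite Coxeter-type groups. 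So as written the text is a program for a proof, not a proof; the lemma remains unestablished.

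The paper's own proof is of an entirely different nature: it is a one-line citation. In characteristic $3$ a sharply doubly transitive group is already known to have a regular abelian normal subgroup \emph{without any saturation hypothesis} (see \cite[Lemma 2.7]{Preprint}; the characteristic-$3$ splitting problem is also treated in \cite{Turk}). That is precisely why the reduction to ${\rm Char}\,T\ne 3$ costs nothing. Your instinct that characteristic $3$ is \emph{delicate} for the saturation machinery is correct --- and that is exactly why the paper side-steps the case by appealing to the known theorem rather than re-deriving it from scratch, which is what your sketch attempts and does not complete.
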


\begin{proof}
In the case of $ {\rm Char} \, T = 3 $, the group $ T $
has a regular abelian normal subgroup
and without the additional saturation condition
(see, for example, \cite [Lemma 2.7] {Preprint}. The lemma is proved.
\end{proof}

\begin{lemma}
When $ T_\alpha $ has
is a finite element of prime order $ q> 5 $, the theorem is true.
\label {l2}
\end {lemma}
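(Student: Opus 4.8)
The hypothesis supplies an element $a\in T_\alpha$ of prime order $q>5$ that is a finite element \emph{of the group $T_\alpha$}; thus $\langle a,a^{g}\rangle$ is finite for every $g\in T_\alpha$, but $a$ is not yet known to be a finite element of $T$, so Proposition~\ref{p6} is not immediately available. By Lemma~\ref{l1} we may assume ${\rm Char}\,T=p>3$, and then by Proposition~\ref{p3} the stabilizer $T_\alpha$ contains a unique involution $j$. The plan is to prove that $\langle a\rangle\trianglelefteq T_\alpha$ and then, via the saturation hypothesis, to manufacture a Frobenius subgroup to which Proposition~\ref{st2} applies; the near-field conclusion then follows from the Comment.

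First I would show $\langle a\rangle\trianglelefteq T_\alpha$. Fix $g\in T_\alpha$; then $a^{g}\in T_\alpha$ and $\langle a,a^{g}\rangle$ is finite, so by saturation it lies in a finite Frobenius subgroup $M\le T$. Since a sharply doubly transitive group has no nonidentity element with two fixed points, $M$ acts faithfully on the orbit $\alpha^{M}$ (of size $\ge 2$, as the kernel of $M$ moves $\alpha$), and by Proposition~\ref{p1} one has $M\cap T_\alpha\cap T_\beta=1$ for $\beta\ne\alpha$; hence this action is Frobenius and $M\cap T_\alpha$ is one of its complements, containing both $a$ and $a^{g}$. As $q>5$, Proposition~\ref{p9} forces $\langle a\rangle$ and $\langle a^{g}\rangle$ to be normal in $M\cap T_\alpha$. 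A Frobenius complement has cyclic Sylow $q$-subgroups for the odd prime $q$, so it cannot contain $C_{q}\times C_{q}$; two normal subgroups of order $q$ must therefore coincide, whence $\langle a\rangle^{g}=\langle a^{g}\rangle=\langle a\rangle$. As $g$ was arbitrary, $\langle a\rangle\trianglelefteq T_\alpha$ and $|\langle a\rangle|=q>2$.

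To finish, note that $j$ normalizes $\langle a\rangle$, so $\langle a,j\rangle$ is finite of order $2q$. By saturation it lies in a finite Frobenius subgroup $V\le T$, and arguing as above $V\cap T_\alpha$ is a complement of $V$ containing the involution $j$ and the element $a$. Thus $V$ is a Frobenius subgroup with involution, its complement is $H=V\cap T_\alpha$, and $H$ contains the subgroup $\langle a\rangle$, which is normal in $T_\alpha$ and of order $q>2$. All hypotheses of Proposition~\ref{st2} are then met, so $T$ has a regular abelian normal subgroup, as required.

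I expect the main obstacle to be the local-to-global step in the second paragraph: both identifying $M\cap T_\alpha$ with a genuine Frobenius complement, so that Proposition~\ref{p9} may be invoked, and then using the cyclic-Sylow structure of complements to force $\langle a^{g}\rangle=\langle a\rangle$. Everything else---the reduction, the role of the unique involution, and the concluding appeal to Proposition~\ref{st2}---is routine once this normality is in hand. It is exactly at $q>5$ that Proposition~\ref{p9} applies; for $q\in\{3,5\}$ a complement such as $SL(2,3)$ or $SL(2,5)$ can contain a non-normal cyclic subgroup of order $q$, which is why those primes are excluded here and treated separately.
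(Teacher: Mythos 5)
Your proof has a genuine gap, and it sits exactly at the point you yourself flag as the ``main obstacle.'' In the second paragraph you apply saturation to $\langle a,a^{g}\rangle$ alone and then assert parenthetically that the kernel of $M$ ``moves $\alpha$.'' Nothing in what you wrote justifies this: $\langle a,a^{g}\rangle$ has odd order, and saturation only guarantees that \emph{some} finite Frobenius overgroup $M$ exists --- a priori this $M$ could lie entirely inside $T_\alpha$. In that case $\alpha^{M}=\{\alpha\}$, your action on the orbit is trivial rather than Frobenius, $M\cap T_\alpha=M$ is not a complement, $a$ could perfectly well lie in the kernel $F_M$, and Proposition~\ref{p9} gives nothing. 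So the step that delivers $\langle a\rangle\trianglelefteq T_\alpha$ is not established as written.

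The paper closes precisely this hole by adjoining the involution to the generating set. Since $j$ is the unique involution of $T_\alpha$ (Proposition~\ref{p3}), it is central in $T_\alpha$, so $L_t=\langle a,a^{t},j\rangle$ is still finite and saturation applies to it. Any finite Frobenius group $M\supseteq L_t$ then contains $j$; because Frobenius groups are centerless while $j$ would be central in any subgroup of $T_\alpha$ containing it, $M\not\le T_\alpha$. Moreover $j$ must lie in a complement, the kernel is inverted by $j$ and so consists of the products $jk$ ($k\in J\cap M$), which by Proposition~\ref{p2} are regular permutations; hence $F_M\cap T_\alpha=1$ and $M\cap T_\alpha$ is a complement containing $a$ and $a^{t}$, which is exactly what Proposition~\ref{p9} needs. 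Your later step with $V\supseteq\langle a,j\rangle$ is immune to this objection precisely because $j\in V$, but there too you should run this argument rather than say ``arguing as above.'' One further remark: once the complement identification is repaired, your endgame is a legitimately different route from the paper's. The paper never invokes Proposition~\ref{st2}; instead it observes that the kernels of the groups $M^{x}$ ($x\in T_\alpha$) cover $N_j$, uses $T=T_\alpha N_j$ (Proposition~\ref{p3}) to conclude that $\langle a,a^{g}\rangle$ is finite for \emph{every} $g\in T$ --- i.e.\ that $a$ is a finite element of all of $T$ --- and then cites the finite-element results (Propositions~\ref{p5},~\ref{p6}). Your direct appeal to Proposition~\ref{st2} is shorter and avoids that covering argument, so with the $j$-device inserted your proof would be a valid alternative.
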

\begin {proof}
 Let $ {\rm Char} \, T = p> 3 $, $ j $ be an involution from $ T_\alpha $
and $ a $ --- final
in $ T_\alpha $ is an element of prime order $ q> 5 $.
Let's choose an arbitrary element $ t \in T_\alpha $.
Due to the finiteness of the element $ a $ and item 3, the subgroup $ L_t = \langle a, a^t, j \rangle $ is finite,
and $ L_t \leq M \in \mathfrak {X} (K) $. In view of item 1 and the properties of finite Frobenius groups
$ M \cap T_\alpha \leq H_M $, $ F_M \cap T_ \alpha = 1 $ and $ F_M $ is an elementary abelian $ p $ -group,
consisting of all products $ jk $, where $ k \in J \cap M $.
According to the structure of complements in finite Frobenius groups
$ L_t = \langle a, a^t \rangle = \langle a \rangle $.
Since the element $ t \in T_\alpha $ is arbitrary, we conclude that
that the subgroup $ \langle a \rangle $ is normal in $ T_\alpha $.
Therefore, the subgroup $ L = \langle a, j \rangle $ is contained in the appendix
finite Frobenius group $ M ^ x \in \mathfrak {X} (A) $ for any $ x \in T_\alpha $,
and the set-theoretic union of the kernels of all such groups (for $ x \in T_\alpha $),
contains the set $ N_j $. From this and the equality $ T = T_ \alpha N_j $ (item 3) it follows that all subgroups
$ L_g = \langle a, a^g \rangle $ are finite for any $ g \in T $,
and by item 6 $ T $ possesses a regular
abelian normal subgroup. The lemma is proved.
\end{proof}

\begin{lemma}
If $T_\alpha$ is a Shunkov group and ${\rm Char}\,T\ne 2$,
then $T_\alpha$ has a local finite periodic part and
subgroup $\Omega_1(T_\alpha )$ generated by all elements of prime orders from
$T_\alpha$, there is a group of one of the following types:
1) (locally) cyclic group;
$\Omega_1(T_\alpha )=C\times L$, where $C$ --- (locally) cyclic $\{2,3\}'$
-- group, and $L\simeq SL_2(3)$; 3) $\Omega_1(T_\alpha )=С\times L$,
where $C$ is (locally) cyclic $\{2,3,5\}'$
-- group, $L\simeq SL_2(5)$. In any case, in $T_\alpha$
is a finite normal subgroup of order greater than 2.
\label{l3}
\end{lemma}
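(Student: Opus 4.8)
The plan rests on two facts about $T_\alpha$. First, since $\mathrm{Char}\,T\ne 2$, Proposition~\ref{p3} guarantees that $T_\alpha$ contains an involution and that it is the only one; as any conjugate $gjg^{-1}$ ($g\in T_\alpha$) is again an involution of $T_\alpha$, it equals $j$, so $j\in Z(T_\alpha)$. Second, by the saturation hypothesis every finite $K\le T_\alpha$ lies in a finite Frobenius subgroup $M\in\mathfrak{X}(K)$, and since $K$ fixes $\alpha$ it lies in a complement $H_M$ (Proposition~\ref{p1} together with the Frobenius structure of $M$); hence every finite subgroup of $T_\alpha$ is a finite Frobenius complement. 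I would then bring in the Zassenhaus classification of finite Frobenius complements: all Sylow subgroups are cyclic, save that the Sylow $2$-subgroup may be generalized quaternion; every subgroup of order $pq$ for distinct primes is cyclic; and the only non-metacyclic behaviour of the prime-order-generated part is produced by the binary polyhedral groups $SL_2(3)$ and $SL_2(5)$.

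First I would prove that the set $P$ of elements of finite order in $T_\alpha$ is a locally finite subgroup. This is the crux and the principal obstacle, since in characteristic $0$ the group $T_\alpha$ need not be periodic. The strategy is to feed the Shunkov hypothesis with the rigid, uniformly bounded Zassenhaus shape of the finite subgroups found above --- each a $Z$-group up to a generalized quaternion Sylow $2$-subgroup, or involving one of $SL_2(3)$, $SL_2(5)$. Using the centrality of $j$ and the defining Shunkov property (conjugate pairs of prime-order elements generate finite subgroups modulo any finite subgroup), I would show that any two elements of finite order of $T_\alpha$ lie in a common finite Frobenius complement; consequently every finitely generated periodic subgroup is finite, $P$ is locally finite, and, as the set of all torsion elements, $P$ is characteristic in $T_\alpha$.

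Granting that $P$ is locally finite, $\Omega_1(T_\alpha)=\Omega_1(P)$ is the directed union of the prime-order-generated subgroups of finite Frobenius complements, all sharing the central involution $j$, and the trichotomy is obtained by reading the Zassenhaus structure off in the limit. If no generalized quaternion Sylow $2$-subgroup and no $SL_2(5)$-section ever occurs, the cyclicity of every subgroup of order $pq$ forces prime-order elements of coprime orders to commute, so each finite prime-order-generated subgroup is cyclic and $\Omega_1(T_\alpha)$ is locally cyclic (case~1). If the binary tetrahedral pattern is forced, its $\{2,3\}$-part stabilizes to $L\simeq SL_2(3)$ while the complementary part of coprime order, being centralized by $L$ inside each containing complement, splits off as a locally cyclic $\{2,3\}'$-factor $C$, giving $\Omega_1(T_\alpha)=C\times L$ (case~2); the non-solvable pattern yields in the same way $L\simeq SL_2(5)$ with a locally cyclic $\{2,3,5\}'$-factor $C$ (case~3).

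Finally I would exhibit a finite normal subgroup of order greater than $2$; as $\Omega_1(T_\alpha)$ is characteristic in $T_\alpha$, it suffices to find one characteristic in the periodic part. In cases~2 and~3 the factor $L$ is characteristic (it is generated by all $\{2,3\}$- respectively $\{2,3,5\}$-elements, which $C$ lacks), so $L$ is a normal subgroup of $T_\alpha$ of order $24$ or $120$. In case~1, if $P$ is finite then $|P|>2$ and $P$ itself is the required normal subgroup, while if $P$ is infinite its Sylow subgroups (locally cyclic, or generalized quaternion at the prime $2$) contain a characteristic subgroup of order $4$ or of order $2q$ with $q$ an odd prime, again normal in $T_\alpha$. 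The degenerate possibility $P=\langle j\rangle$ does not arise: the saturation hypothesis (via Proposition~\ref{p4}, once Lemma~\ref{l1} has reduced us to $\mathrm{Char}\,T=p\ge 5$) places in $T_\alpha$ a cyclic subgroup of order $p-1\ge 4$. This would complete the proof.
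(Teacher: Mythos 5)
Your overall architecture matches the paper's: saturation plus ${\rm Char}\,T\ne 2$ puts every finite subgroup $K$ of $T_\alpha$ inside a complement of a finite Frobenius group (the kernel consists of regular permutations, so $F_M\cap T_\alpha=1$), the Zassenhaus-type structure of finite Frobenius complements gives the trichotomy for $\Omega_1(K)$, and the finite normal subgroup of order $>2$ is extracted at the end by a case analysis (the $SL_2(3)$/$SL_2(5)$ factor in cases 2 and 3; characteristic subgroups of the locally cyclic or quaternion Sylow structure in case 1). This is essentially the paper's outline, except that the paper quotes the classification of $\Omega_1(K)$ from \cite[Proposition 11]{C1} rather than rederiving it from Zassenhaus.

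The genuine gap is exactly the step you yourself flag as ``the crux and the principal obstacle'': the existence of a locally finite periodic part of $T_\alpha$. You propose to obtain it by showing that any two elements of finite order of $T_\alpha$ lie in a common finite Frobenius complement, ``using the centrality of $j$ and the defining Shunkov property,'' but no argument is given, and none can be routine: the Shunkov condition only yields finiteness of subgroups $\langle a, a^g\rangle$ generated by \emph{conjugate} elements of \emph{prime} order (modulo finite subgroups), and says nothing directly about $\langle a,b\rangle$ for two arbitrary torsion elements; moreover, in a general Shunkov group the torsion elements need not even form a subgroup. Passing from the Shunkov condition, together with the Frobenius-complement shape of all finite subgroups, to the local finiteness of the periodic part and to the statement that $\Omega_1(T_\alpha)$ inherits the structure of the $\Omega_1(K)$'s is precisely the content of the external theorem the paper invokes at this point, namely \cite[Theorem 1]{C} --- a substantial result with its own dedicated paper, not a step that can be dispatched in a sentence. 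Your subsequent direct-limit argument for the trichotomy presupposes this unproven local finiteness, so everything downstream rests on the placeholder. (A smaller point: the paper settles the case where the periodic part is a $2$-group by quoting \cite[Theorem 2]{CCC}, which says it is quasicyclic or locally quaternion; that citation is the precise justification your ``characteristic subgroup of order $4$'' claim in case 1 needs.) In short, the proposal is a correct skeleton, but its load-bearing step is exactly the deep cited theorem, asserted rather than proved.
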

\begin{proof}
By virtue of the saturation condition and the condition $ {\rm Char} \, T \ne 2 $, each finite
the subgroup $ K $ of $ T_\alpha $ is contained in the complement of the finite group
Frobenius $ M \in \mathfrak {X} (K) $, and according to
To the proposition \cite [Proposition 11] {C1}, its subgroup $ \Omega_1 (K) $ has the structure indicated in the lemma.
By \cite [Theorem 1] {C}, the same structure has
subgroup $ \Omega_1 (T_ \alpha) $, and $ T_ \alpha $
possesses a local finite periodic part, we denote it by $ S $.
If $ S $
not 2-group, then
$ \Omega_1 (T_\alpha) $ obviously contains a finite normal subgroup of $ T_\alpha $ of order,
greater 2. If $ S $ is a 2-group, then by \cite [Theorem 2] {CCC} either a quasicyclic
group, either locally quaternionic and also contains
finite normal in $ T_\alpha $
a subgroup of order $ 2^n $ for any $ n> 1 $. The lemma is proved.
\end{proof}

\begin{lemma}
The theorem is true if  $ T_\alpha $ contains a finite
normal subgroup
$ L $, $ | L |> 2 $.
\label {l4}
\end {lemma}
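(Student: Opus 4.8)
The plan is to reduce the lemma to Proposition~\ref{st2}: it suffices to produce inside $T$ a Frobenius subgroup $V$ containing an involution whose complement $H=V\cap T_\alpha$ contains a subgroup that is normal in $T_\alpha$ and has order $>2$. By hypothesis $L$ is normal in $T_\alpha$ with $|L|>2$, so the entire problem is to build such a $V$ with $L\le H$. We may take ${\rm Char}\,T=p>2$ (indeed $p>3$ by Lemma~\ref{l1}); then by Proposition~\ref{p3} the stabilizer $T_\alpha$ contains a unique involution $j$.

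First I would capture both $L$ and $j$ in one finite subgroup and then inflate it using saturation. Since $j$ normalizes the normal subgroup $L$, the set $K=L\langle j\rangle$ is a finite subgroup of $T_\alpha$ containing $j$, and by the saturation hypothesis $K\le M$ for some finite Frobenius group $M\in\mathfrak{X}(K)$. As in the proof of Lemma~\ref{l2}, the kernel $F_M$ is an elementary abelian $p$-group made up of the regular substitutions $jk$ with $k\in J\cap M$ (Propositions~\ref{p2},~\ref{p3}); in particular $F_M$ has odd order, $j\notin F_M$, and $F_M\cap T_\alpha=1$ because a regular substitution fixes no point. Put $H=M\cap T_\alpha$, so that $L,\langle j\rangle\le K\le H$ and $H\cap F_M=1$; consequently $H$ is contained in a complement of $M$.

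Now set $V=F_M\rtimes H\le T$. Every nontrivial element of $H$ lies in a complement of $M$, hence acts fixed-point-freely on the kernel, so $V$ is a genuine Frobenius group with kernel $F_M$ and complement $H$. Because each nontrivial element of $F_M$ is regular on $F$, a product $fh$ with $f\in F_M$, $h\in H$ fixes $\alpha$ only when $f=1$; therefore $V\cap T_\alpha=H$, i.e.\ $H$ is exactly the complement of $V$ and it contains the involution $j$. Since $L\le H=V\cap T_\alpha$, $L$ is normal in $T_\alpha$ and $|L|>2$, Proposition~\ref{st2} gives a regular abelian normal subgroup of $T$; by the Comment preceding Lemma~\ref{l1} this means $F(+,\cdot)$ is a near-field, which proves the lemma.

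The only genuinely delicate point is guaranteeing that $V\cap T_\alpha$ is precisely a Frobenius complement rather than something larger, and this is exactly where sharp double transitivity enters: the kernel $F_M$ of the saturating Frobenius group consists of fixed-point-free substitutions (Proposition~\ref{p2}), which forces $F_M\cap T_\alpha=1$ and lets me pass from $M$ to $V=F_M\rtimes(M\cap T_\alpha)$ with $V\cap T_\alpha$ equal to the complement. Once that identification is secured, Proposition~\ref{st2} does the remaining work, and no analysis of the characteristic beyond $p\ne2$ is required.
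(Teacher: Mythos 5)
Your proof is correct, but it takes a genuinely different route from the paper's. The paper's argument makes $L$ itself a Frobenius complement: after arranging $j\in L$, it uses normality of $L$ in $T_\alpha$ to get $M^t\in\mathfrak{X}(L)$ with $H_{M^t}=L$ for every $t\in T_\alpha$, so that the set-theoretic union of the kernels $F_{M^t}$ covers $N_j$; combined with the factorization $T=T_\alpha N_j$ from Proposition \ref{p3}, this shows that $\langle a,a^g\rangle$ is finite for every $g\in T$ and any $a\in L$ of order $>2$, i.e.\ $a$ is a finite element of $T$, and the conclusion then comes from Proposition \ref{p5} (the $(a,b)$-finiteness result proved in \cite{SD3}). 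You instead stop after the very first saturation step: from the single Frobenius group $M\geq L\langle j\rangle$ you form $V=F_M(M\cap T_\alpha)$ (which is just $M$ itself when $M\cap T_\alpha$ is a full complement), verify $V\cap T_\alpha=M\cap T_\alpha\geq L$, and invoke Proposition \ref{st2}, which is tailor-made for exactly this configuration. Your route is shorter and purely local --- no conjugation over $T_\alpha$, no covering of $N_j$, no use of $T=T_\alpha N_j$, and no need to absorb $j$ into $L$ --- but it shifts all of the substantive work onto Proposition \ref{st2}, which the paper states in the preliminaries without proof or reference, whereas the paper's longer argument reduces to a result established in the cited literature and reuses the same covering machinery as Lemmas \ref{l2} and \ref{l5}. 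Both proofs rest on the same package of facts about a saturating Frobenius group $M$ containing $j$ (namely $M\cap T_\alpha\leq H_M$, $F_M\cap T_\alpha=1$, and $F_M$ elementary abelian consisting of the products $jk$, $k\in J\cap M$), so your appeal to them ``as in Lemma \ref{l2}'' is at the paper's own level of rigor; the only cosmetic slip is your phrasing that $fh$ ($f\in F_M$, $h\in H$) fixes $\alpha$ because kernel elements are regular --- the clean reason is that $fh\in T_\alpha$ and $h\in T_\alpha$ force $f\in F_M\cap T_\alpha=1$, which is what you in fact use.
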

\begin {proof}
Since the involution of $ j $ in $ T_\alpha $ is unique, we can assume that
$ j \in L $. By the saturation condition $ M \in \mathfrak {X} (L) $, and
the subgroup $ M \cap T_ \alpha $ is strongly isolated in $ M $.
Therefore, the subgroup $ L $ is the complement of some finite group
Frobenius $ M \in \mathfrak {X} (L) $. From the normality of $ L $ to $ T_\alpha $
it follows that $ M^t \in \mathfrak {X} (L) $ and $ H_M = L $.
Hence it follows that
set-theoretic union of the kernels of all such groups (for $ t \in T_\alpha $),
contains the set $ N_j $. Let $ a $ be an element of order greater than 2 from $ L $
(since $ | L |> 2 $, such an element exists by virtue of item 3).
It follows from what has been proved that all subgroups
$ L_g = \langle a, a ^ g\rangle $ are finite for any $ g \in T $,
and by item 6 $ T $ possesses a regular
abelian normal subgroup. The lemma is proved.
\end{proof}

\begin{lemma}
When $ T_\alpha $ contains a finite element $ a $ of order $ 3 $,
the theorem is also true.
\label {l5}
\end {lemma}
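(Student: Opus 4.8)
The plan is to produce inside $T_\alpha$ a finite normal subgroup of order greater than $2$ and then to quote Lemma~\ref{l4}. First I would fix the standing reductions: by Lemma~\ref{l1} I may assume ${\rm Char}\,T=p>3$, and since $j$ is the only involution of $T_\alpha$ (Proposition~\ref{p3}) every element of $T_\alpha$ centralizes it, so $j\in Z(T_\alpha)$. For each $t\in T_\alpha$ the subgroup $\langle a,a^t\rangle$ is finite because $a$ is a finite element of $T_\alpha$, and, $j$ being central, $K_t=\langle a,a^t,j\rangle=\langle a,a^t\rangle\langle j\rangle$ is finite as well; by the saturation hypothesis $K_t$ lies in the complement $H_M$ of a finite Frobenius group $M\in\mathfrak X(K_t)$, so $\Omega_1(K_t)$ has the structure recorded in Lemma~\ref{l3} (Proposition~11 of \cite{C1}): it is (locally) cyclic, of type $C\times SL_2(3)$, or of type $C\times SL_2(5)$.

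I would then split according to whether $\langle a\rangle$ is normal in $T_\alpha$. If $a^t\in\langle a\rangle$ for every $t\in T_\alpha$, then $\langle a\rangle$ is a finite normal subgroup of $T_\alpha$ of order $3>2$ and Lemma~\ref{l4} finishes the proof. Equivalently one may copy the covering argument of Lemma~\ref{l2}: for $x\in T_\alpha$ one has $\langle a\rangle\le H_{M^x}$, and $\bigcup_{x\in T_\alpha}F_{M^x}\supseteq N_j$ because $T_\alpha$ is transitive on $J\setminus T_\alpha$ (Proposition~\ref{p2}); together with $T=T_\alpha N_j$ (Proposition~\ref{p3}) this shows that $\langle a,a^g\rangle$ is finite for every $g\in T$, so that $a$ is a finite element of $T$ of order $>2$ and Proposition~\ref{p6} applies.

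So the real work is the case where $\langle a\rangle$ is not normal; here I would pick $t$ with $a^t\notin\langle a\rangle$. Then $\Omega_1(\langle a,a^t\rangle)$ cannot be locally cyclic (otherwise the two order-$3$ subgroups would coincide), so it is of type $SL_2(3)$ or $SL_2(5)$; in either case $T_\alpha$ contains a copy of $SL_2(3)$ whose unique involution is $j$ and whose quaternion subgroup $Q=O_2(SL_2(3))$, of order $8$, is permuted $3$-cyclically by $a$. My aim would be to upgrade this local quaternion subgroup to a finite $T_\alpha$-normal $2$-subgroup of order $>2$. Since $j$ is the only involution of $T_\alpha$, the $2$-elements of $T_\alpha$ form a locally finite group with a single involution, hence a locally cyclic or locally (generalized) quaternion group by \cite[Theorem~2]{CCC}; its characteristic finite subgroups (for instance the unique subgroup of order $4$ of the cyclic part, or $Q$ itself when these $2$-elements are already finite) are normal in $T_\alpha$ and have order $>2$, so Lemma~\ref{l4}, or alternatively Proposition~\ref{st2} applied to a finite Frobenius overgroup with involution $j$, completes the argument.

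The hard part is precisely this last step. On the one hand, without a Shunkov-type hypothesis it is not automatic that the $2$-elements of $T_\alpha$ close up into a subgroup, or that the quaternion structure is available globally rather than merely inside $\langle a,a^t\rangle$; securing this must exploit the finiteness of $a$ together with its $3$-cyclic action on $Q$. On the other hand, the $SL_2(5)$ possibility is the genuine obstacle: $SL_2(5)$ has non-normal Sylow $2$-subgroups, so the naive normality of $Q$ fails, and one must either exclude $SL_2(5)$ for an order-$3$ finite element or dispose of it separately. This is exactly where the prime $3$ behaves better than the prime $5$, the surrounding $SL_2(3)$ always furnishing a quaternion handle while an order-$5$ finite element would force $SL_2(5)$ with no such handle, and it explains why order $5$ is excluded from case~(4) of the theorem whereas order $3$ is admitted.
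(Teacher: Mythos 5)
Your first case (when $\langle a\rangle$ is normal in $T_\alpha$) is fine and matches the paper, but your second case has a genuine gap, and you concede as much yourself. The claim that ``the $2$-elements of $T_\alpha$ form a locally finite group with a single involution'' is simply false in general: already in $SL_2(5)$, which has a unique involution, the union of the (five, non-normal, quaternion) Sylow $2$-subgroups is not a subgroup, so nothing forces the $2$-elements of $T_\alpha$ to close up, and no appeal to \cite[Theorem~2]{CCC} can start. You correctly diagnose that $SL_2(5)$ is where your plan breaks, but you never dispose of it, so the proof is incomplete at exactly the step you call ``the hard part.''

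The missing idea is Mazurov's theorem \cite{Maz}, and with it the whole strategy changes: one does not need a normal $2$-subgroup at all. As in Lemma~\ref{l2}, every $L_t=\langle a,a^t\rangle$ ($t\in T_\alpha$) is finite and lies in a complement of a finite Frobenius group, hence by the structure result behind Lemma~\ref{l3} each $L_t$ is $\langle a\rangle$, $SL_2(3)$ or $SL_2(5)$. Since $a$ has order $3$ and all these pairwise-generated subgroups are finite, Mazurov's main theorem gives that the normal closure $L=\langle a^{T_\alpha}\rangle$ is \emph{locally finite}; the same structure result then forces $L$ itself to be $\langle a\rangle$, $SL_2(3)$ or $SL_2(5)$, i.e.\ a finite normal subgroup of $T_\alpha$ of order $>2$, and Lemma~\ref{l4} applies verbatim --- $SL_2(5)$ causes no difficulty because one uses it whole rather than trying to extract a normal quaternion subgroup from it. This is also the true reason order $3$ is admitted in the theorem while order $5$ is not: Mazurov's local-finiteness theorem is specific to elements of order $3$, whereas for order $5$ no analogous closure result is available (and Proposition~\ref{p9} only covers primes $q>5$).
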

\begin {proof}
As in the lemma \ref {l2}, it is proved that for any $ t \in T_\alpha $
the finite subgroup $ L_t = \langle a, a ^ t \rangle $ is contained in the appendix
of a finite Frobenius group $ M $ from $ \mathfrak {X} (L) $.
As in the lemma \ref {l3}, we conclude that either $ L_t = \langle a \rangle $,
or $ L_t $ is isomorphic to one of the groups $ SL_2 (3) $, $ SL_2 (5) $.
By the main theorem in \cite {Maz}, the normal closure
$ L = \langle a ^ {T_ \alpha} \rangle $ of $ a $ in $ T_ \alpha $ is locally finite.
As follows from the proof of the lemma \ref {l3}, or $ L = \langle a \rangle $,
or $ L $ is isomorphic to one of the groups $ SL_2 (3) $, $ SL_2 (5) $.
By lemma \ref {l4}, the theorem is true. The lemma is proved.
\end{proof}

\smallskip

A proper subgroup $ H $ of a group $ G $ is called {\it conjugate dense},
if $ H $ has a non-empty intersection with every conjugacy class in $ G $
elements.

\begin{lemma}
The theorem is true when $ T $ is a periodic group and
$ T_\alpha $ has no conjugate dense subgroups.
\label {l7}
\end {lemma}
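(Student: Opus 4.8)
The plan is to reduce the statement to the hypotheses of the lemmas already available. By Lemma~\ref{l1} we may assume $\mathrm{Char}\,T=p$ is odd with $p\neq 3$, hence $p\geq 5$; since $T$ is periodic, $T_\alpha$ is periodic and, by Proposition~\ref{p3}, contains a unique involution $j$, which is central in $T_\alpha$. By Lemmas~\ref{l2},~\ref{l4} and~\ref{l5} it suffices to exhibit in $T_\alpha$ a finite element of order $3$ or of prime order $>5$, or a finite normal subgroup of order greater than $2$. Equivalently, by Propositions~\ref{p7} and~\ref{st2}, it suffices to produce a Frobenius subgroup $V$ with complement $H=V\cap T_\alpha$ whose core $\bigcap_{x\in T_\alpha}H^{x}$ has order greater than $2$.

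First I would build a candidate for such a $V$. Choose a strictly real element $b=jk$ with $k\in J\setminus T_\alpha$; then $b$ has order $p$, and Proposition~\ref{p7} yields $A=C_T(b)$ elementary abelian, inverted by $j$, together with $V=N_T(A)=A\lambda H$, where $H=V\cap T_\alpha$. By Proposition~\ref{p2} the group $T_\alpha$ is transitive on $J\setminus T_\alpha$, so all these $b$ are $T_\alpha$-conjugate and the complements $H$ form a single conjugacy class; in particular $D:=\bigcap_{x\in T_\alpha}H^{x}$ is normal in $T_\alpha$ and contains $j$. If $|D|>2$ we are done by Proposition~\ref{st2}. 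If $H=T_\alpha$, then since the $T_\alpha$-conjugates of $b$ exhaust $N_j\setminus\{1\}$ and all lie in the normal subgroup $A$, we get $A\supseteq N_j$, whence $T=T_\alpha N_j=A\lambda T_\alpha$ and $A$ is a regular abelian normal subgroup of $T$. So I may assume that $H$ is a proper subgroup and $D=\langle j\rangle$.

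It is here that the hypothesis on conjugate dense subgroups enters. The goal is to show that under these assumptions $H$ meets every conjugacy class of $T_\alpha$, i.e.\ $\bigcup_{x\in T_\alpha}H^{x}=T_\alpha$: this would make the proper subgroup $H$ conjugate dense in $T_\alpha$, contradicting the hypothesis and so ruling out the case $H\neq T_\alpha$, $|D|=2$. To route an arbitrary element into $H$, I would use the finite Frobenius saturation: every finite subgroup of $T_\alpha$ lies in the complement of a finite Frobenius subgroup of $T$ (Proposition~\ref{p4}), whose Sylow subgroups are cyclic or generalized quaternion and in which, by Proposition~\ref{p9}, subgroups of prime order $>5$ are normal. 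Combining this local structure with the transitivity of Proposition~\ref{p2}, the plan is to conjugate each finite subgroup of $T_\alpha$ --- hence, by periodicity, each element --- into $H$. As a variant, once one knows that all two-generated subgroups $\langle a,a^{t}\rangle$ ($t\in T_\alpha$) are finite, $T_\alpha$ is locally finite, hence a Shunkov group, and Lemmas~\ref{l3} and~\ref{l4} finish the proof directly.

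The hard part is precisely this conjugate-density step, because it is what links the purely group-theoretic hypothesis to the concrete Frobenius geometry: in near-field language it asserts that every element of $T_\alpha\simeq F^{*}$ is conjugate into the multiplicative group $\Delta^{*}\cong H$ of the sub-near-domain with additive group $A$. The no-conjugate-dense condition cannot be used in isolation --- a periodic group may well be non-locally-finite and still possess no proper conjugate dense subgroup --- so the whole weight of the argument falls on extracting conjugate density (equivalently, finiteness of the subgroups $\langle a,a^{t}\rangle$) from the saturation by finite Frobenius groups together with Propositions~\ref{p2},~\ref{p4} and~\ref{p9}. Once that is secured, either a finite element of order $3$ or prime order $>5$ appears and Lemma~\ref{l2} or~\ref{l5} applies, or $D$ has order greater than $2$ and Proposition~\ref{st2} applies; in every case $T$ acquires a regular abelian normal subgroup, and $F(+,\cdot)$ is a near-field.
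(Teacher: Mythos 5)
Your framing is sound as far as it goes, but the proposal has a genuine gap, and you name it yourself: the conjugate-density step --- showing that every element of $T_\alpha$ has a conjugate in $H = T_\alpha \cap N_T(A)$ --- is presented as a plan (``the plan is to conjugate each finite subgroup of $T_\alpha$ \dots into $H$'') and then explicitly left open (``the hard part is precisely this conjugate-density step''). That step is not a deferrable technicality; it is the entire content of the lemma, and everything else in your text (the case split on the core $D$, the appeal to Proposition~\ref{st2}, the alternative route via local finiteness and Lemma~\ref{l3}) is packaging around it. Moreover, the tools you propose for it are partly misidentified: Proposition~\ref{p4} is a statement about dihedral subgroups of an arbitrary sharply doubly transitive group of odd characteristic, and Proposition~\ref{p9} is only needed in Lemma~\ref{l2}; what the step actually requires is the saturation hypothesis of Theorem~1 itself, combined with Propositions~\ref{p1}, \ref{p2}, \ref{p3} and~\ref{p7}.

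For comparison, here is how the paper closes exactly this gap, in a few lines. Fix a nontrivial $b \in N_j$ and put $A = C_T(b)$, $H = T_\alpha \cap N_T(A)$ as in Proposition~\ref{p7}; since $A$ is abelian and strongly isolated, $C_T(f) = A$ for every nontrivial $f \in A$. Let $a \in T_\alpha$ be arbitrary; it has finite order by periodicity, and $j$ is the unique, hence central, involution of $T_\alpha$ (Proposition~\ref{p3}), so $L = \langle a, j \rangle$ is finite. By saturation, $L \leq M$ for a finite Frobenius group $M \leq T$; by Proposition~\ref{p1} and the structure of finite Frobenius groups, $L \leq M \cap T_\alpha \leq H_M$, while the kernel $F_M$ consists of the products $jk$, $k \in J \cap M$, hence $F_M \subseteq N_j$. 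Since $j$ is central in $T_\alpha$ and $T_\alpha$ is transitive on $J \setminus T_\alpha$ (Proposition~\ref{p2}), $T_\alpha$ is transitive by conjugation on the nontrivial elements of $N_j$; choose $x \in T_\alpha$ with $c^x = b$ for a nontrivial $c \in F_M$. As $F_M$ is abelian, $F_M^x \leq C_T(b) = A$; and since $a^x$ normalizes $F_M^x$, strong isolation gives $a^x \in N_T\bigl(C_T(F_M^x)\bigr) = N_T(A)$, i.e.\ $a^x \in H$. Thus $H$ meets every conjugacy class of $T_\alpha$, so by the hypothesis of the lemma $H$ cannot be proper: $H = T_\alpha$. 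Then $A$ is $T_\alpha$-invariant, so $A = N_j$, and $A$ is normal in $T = T_\alpha N_j$ and is the required regular abelian normal subgroup. Your reduction correctly isolates this statement as what must be proved, but does not prove it, so the proposal does not constitute a proof.
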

\begin {proof}
By item 3, $ T_\alpha $ has a unique involution $ j $,
therefore, for an arbitrary element $ a $ of finite order from $ T_\alpha $
the subgroup $ L = \langle a, j \rangle $ is finite. Saturation condition
$ L \leq M \in \mathfrak {X} (L) $, and we can assume that
 $ H_M = L $, $ F_M \subseteq N_j $.
By item \ref {p7}, for any nonidentity element $ b \in N_j $, the subgroup
$ A = C_T (b) $ is periodic, contained in $ N_j $ and strongly isolated in $ T $,
in this case, $ N_T (A) = A \leftthreetimes H $, where $ H = T_\alpha \cap N_T (A) $.
Since in view of paragraphs. 2, 3 $ T_\alpha $ acts transitively by conjugation
on the set $ N_j $, then $ F_M ^ x \leq A $ for some $ x \in T_\alpha $.
This implies that $ a ^ x \in H $ and $ H $ is the conjugate dense subgroup
group $ T_\alpha $. According to the conditions of the lemma, $ H = T_\alpha $, by item 3, $ A = N_j $,
and the lemma is proved.
\end{proof}

\bigskip

We now complete the proof of the theorem. The first statement of the theorem follows from
Lemmas 3, 4. Statement 2 is proved in Lemma 6. Statement 3 of Theorem
coincides with Lemma 4. Statement 4 follows from Lemmas 2 and 5.
The theorem is proved.

\section{Proof of Theorem 2}

Let $ G $ be an infinite sharply triply transitive permutation group
multitudes
$ X = F \cup \{\infty \} $ satisfies the conditions of Theorem 2.
 As in \cite {D}, by $ B $ we denote the stabilizer $ G_\alpha $ of the point $ \alpha \in X $
and through $ H $ --- stabilizer $ G _{\alpha \, \beta} = G_\alpha \cap G_\beta $ of two
points
$ \alpha = \infty \in X $, $ \beta \in F $.
Let also $ J $ be the set of involutions of the group $ G $, and $ J_m $ be the set
involutions stabilizing exactly $ m $ points, $ m = 0, \, 1, \, 2 $.
\smallskip

\begin{proof}  of the theorem.
By the lemma, \ref {l1} $ B = U \leftthreetimes H $ is a Frobenius group,
$ H $ contains an involution $ z $ and $ N = N_G (H) = H \leftthreetimes \langle v \rangle $,
where $ v \in J $. For $ b \in U^\# $ there is an element $ a \in H $ of order
$ p-1 $ such that $ \langle a, b \rangle =
\langle b \rangle \leftthreetimes \langle a \rangle $ --- exactly twice transitive
group of order $ p (p-1) $. Let $ S $ be an arbitrary finite subgroup
from $ U $ containing an arbitrary element $ c $ from $ U \setminus \langle b \rangle $
and $ K = \langle b, S, a \rangle $. The subgroup $ K $ is obviously finite and by the saturation condition
$ K \leq M $, where $ M $ is a finite unsolvable group.
Let $ L = L (M) $ be the layer of the group $ M $ \cite [Proposition 1.4, p. 53] {Gor}.
It is clear that $ Z (L) = 1 $ and since the 2-rank of the group $ G $ is 2,
then $ L $ is a simple group. By virtue of lemma 6 \cite {D} $ L $ is isomorphic to $ L_2 (p^n) $ and
$ P = U \cap L $ is a Sylow $ p $ -subgroup of the group $ L $. As known,
all cyclic subgroups of $ P $ are conjugate in the subgroup
$ N_L (P) $, and $ P^\# $ splits into two conjugate classes.
Since $ \langle a, b \rangle \leq M $ and $ \langle a \rangle $ acts
transitively to $ \langle b \rangle ^ \# $, then obviously $ | M: L | = 2 $ and $ M \simeq PGL_2 (p ^ n) $.
The subgroup $ N_M (P) $ acts transitively on $ P^\# $, therefore
$ c = b^h $ for some $ h \in H \cap M $. Since the element $ c \in
P^\# $ we conclude that $ H $ is a periodic group.
By \cite [Theorem 2] {SD}, the group $ G $ is locally finite. The theorem is proved.
\end{proof}


\begin{thebibliography}{9}
\bibitem{BDS}
Durakov E.B., Bugaeva E.V., Sheveleva I.V.
On Sharply Doubly-Transitive Groups// Журн. Сиб. федер. ун-та.
Математика и физика,
Т.~6, 2013, C.~28-32.



\bibitem{SD3}
A. I. Sozutov, E. B. Durakov, On exactly doubly transitive groups with generalized
finite elements // Sib. math. jur., 58:5 (2017), 1144--1149
(http://www.mathnet.ru/rus/smj2925).


\bibitem{Kou}
Kourovka notebook: Unsolved problems of group theory --
6-17 ed., Novosibirsk, 1978-2012

\bibitem{RST} Eliyahu Rips, Yoav Segev, Katrin Tent,
  A sharply 2-transitive group without a non-trivial abelian normal subgroup//
arXiv:1406.0382v4 [math.GR], 22 Oct. 2014, 1-17.

\bibitem{RipsTent} Eliyahu Rips, Katrin Tent,
  Sharply 2-transitive groups in characteristic 0//
arXiv:1604.00573 [math.GR],  2 Apr 2016.


\bibitem{AndreTent} Simon Andre, Katrin Tent,
  Simple sharply 2-transitive groups//
arXiv: 2111.09580 [math.GR],  18 Nov 2021.

\bibitem{Turk} Seyfi Turkelli,
  Splitting of Sharply 2-Transitive Groups of Characteristic 3//
arXiv: 0809.0959 [math.GR],  5 Sep 2008.

\bibitem{TenZ}
Katrin Tent and Martin Ziegler,
Sharply 2-transitive groups// arXiv:1408.5612v1 [math.GR],
24 Aug 2014, 1-5.

\bibitem{J}
Jordan C. Recherches sur les substitutions//
J. Math. Pures Appl. (2), {\bf 17} (1872), 351 -- 363.


\bibitem{W}
H. W\"ahling, Theorie der Fastk\"orper. Essen: Thalen Ferlag.
1987.


\bibitem{Gor}
Gorenstein D. Finite simple groups.-- M.: Mir, 1985.


\bibitem{Hal}
M. Hall, Group Theory. M.: IL, 1962.

\bibitem{C}
A.I. Sozutov, On Shunkov groups acting freely on Abelian groups,
{\it Sib. math. journal}, {\bf 54}(2013), No. 1, 188-198.

\bibitem{C1}
A.I. Sozutov, On groups saturated with finite Frobenius groups,
{\it Mat. notes}, {\bf 54}(2013), no. 1, 188-198.

\bibitem{SD}
A.I. Sozutov, E.B.Durakov,
On the local finiteness of periodic exactly thrice transitive groups, {\it
Algebra and Logic}, {\bf 54}(2015), no. 1, 70-84.

\bibitem{Maz}
V.D. Mazurov, "Characterization of alternating groups", Algebra and logic,
{|bf 44:}1 (2005), 54-69.



\bibitem{Ten}
Tent K., Sharply 3-tranzitive groups// Advances in Mathematics
Volume 286, 2 January 2016, Pages 722-728




\bibitem{KraS}
Sozutov A. I., Kravtsova O. V. On $KT$-fields and exactly thrice transitive groups
// Algebra and Logic, 57 (2018), No. 2, 232-242.


\bibitem{PSS}
A.M. Popov, A.I. Sozutov, V.P. Shunkov, Groups with systems of Frobenius
subgroups//Krasnoyarsk: KSTU Publishing House. 2004. 210 pp. -- ISBN 5-7636-0654-X.

\bibitem{Preprint}
Sozutov A.I., Suchkov N.M. On some doubly transitive groups//
Krasnoyarsk.-- INM SO RAN.-- 1998.-- Preprint \No 17.-- P. 24.

\bibitem{CCC}
A.I. Sozutov, N.M. Suchkov, N.G. Suchkova, Infinite groups with involutions.--
Krasnoyarsk:
Siberian Federal University.-- 2011.-- 149 p.

\bibitem{D}
E. B. Durakov, Sharply 3-transitive Groups with Finite Element//Journal of Siberian Federal University. Mathematics \& Physics 2021, 14(3), 344–350


\end{thebibliography}
\end{document}